\documentclass[12pt,twoside]{article}
\usepackage{amssymb,amsthm}
\usepackage[namelimits,sumlimits,fleqn]{amsmath}
\usepackage{setspace}
\usepackage[ hmargin={2.2cm,2.4cm}, top=36mm,
    a4paper,pdftex, textheight=230mm,
    headheight=20mm, headsep=10mm, bindingoffset=0.9cm]
    {geometry}
\usepackage[small, margin=20pt]{caption}
\usepackage[usenames,dvipsnames]{color}
\definecolor{darkblue}{rgb}{0.0,0.0,0.6}
\usepackage{float}\restylefloat{figure}
\usepackage{url}
\usepackage[backref=page]{hyperref}
\hypersetup{colorlinks=true,breaklinks=true,
            linkcolor=darkblue,urlcolor=darkblue,
            anchorcolor=darkblue,citecolor=darkblue}

\usepackage{fancyhdr}
\pagestyle{fancy} \fancyhead{} \fancyhead[CO]{Translated Dot Products in Finite Fields} \fancyhead[CE]{G. Petridis} \fancyfoot{}
\fancyfoot[C]{\thepage}

\setlength{\parskip}{1.3 ex plus0.5ex minus 0.2ex}
\setlength{\parindent}{0em}

\allowdisplaybreaks[4]

\title{On the Number of Dot Products Determined by a Large Set and One of its Translates in Finite Fields}
\author{Giorgis Petridis}
\date{}

\theoremstyle{plain}
\newtheorem{theorem}{Theorem}[section]
\newtheorem{lemma}[theorem]{Lemma}

\newtheorem{corollary}[theorem]{Corollary}

\theoremstyle{definition}

\newtheorem*{acknowledgement}{Acknowledgement}

\theoremstyle{definition}
\newtheorem*{unremark}{Remark}

\newcommand{\F}{\mathbb{F}_q} 
\newcommand{\R}{\mathbb{R}} 
\newcommand{\f}{\frac} 
\newcommand{\cd}{\cdot} 
\newcommand{\sm}{\setminus} 


\renewcommand*{\backref}[1]{}
\renewcommand*{\backrefalt}[4]{%
    \ifcase #1 (Not cited.)%
    \or        (Cited on page~#2.)%
    \else      (Cited on pages~#2.)%
    \fi}

\begin{document}

\onehalfspacing

\pagenumbering{arabic}

\setcounter{section}{0}

\bibliographystyle{plain}

\maketitle

\begin{abstract}
Let $E \subseteq \F^2$ be a set in the 2-dimensional vector space over a finite field with $q$ elements, which satisfies $|E| > q$. There exist $x,y \in E$ such that $|E \cd (y-x)| > q/2.$ In particular, $(E+E) \cd (E-E) = \F$.
\end{abstract}

\section[Introduction]{Introduction}
\label{Introduction}

\let\thefootnote\relax\footnotetext{The author is supported by the NSF DMS Grant 1500984.}

The question of determining a lower bound on the cardinality of a set $E \subseteq \F^2$ in a 2-dimensional vector space over a finite field $\F$ with $q$ elements, which guarantees that the set of dot products determined by $E$ has cardinality strictly greater than $q/2$, goes back at least as far as a paper of Hart and Iosevich \cite{Hart-Iosevich2008}. By the \emph{set of dot products determined by $E$} we mean the following subset of $\F$:
\[
E \cd E = \{ u \cd v : u, v \in E\}.
\]
Hart and Iosevich proved that if $|E| > q^{3/2}$, then $E \cd E = \F^*:=\F\sm\{0\}$, with the exponent of $3/2$ in the lower bound for $|E|$ being essentially sharp (see Corollary 2.4 in \cite{HIKR2011}). 
For similar results in higher dimensions see \cite{CEHIK2012}.

An analogous result was proved in the context of geometric measure theory by Erdo\u{g}an, Hart, and Iosevich \cite{EHI2013}. The authors showed that if a planar set $E \subset \R^2$ has Hausdorff dimension $\dim_H(E) > 3/2$, then the set $E \cd E \subset \R$ has positive Lebesgue measure. The result can loosely be interpreted as saying that the set of dot products determined by a ``large'' planar set is ``large''. 

The same argument shows that if $\dim_H(E) > 1$, there exists $x \in E$ such that the set 
\[
E \cd (E -x) = \{ u \cd (v -x) : u,v \in E\}
\]
has positive Lebesgue measure. 

Recently, in a breakthrough paper on the Falconer conjecture, Orponen established a similar result. If $E \subset \R^2$ is an Ahlfors-David regular planar set with Hausdorff dimension $\dim_H(E) \geq 1$, then there exists $x \in E$ such that $E\cd (E -x)$ has packing dimension equal to 1 \cite{Orponen2015}.

We prove an analogous result in the context of 2-dimensional vector spaces over finite fields. In fact our result concerns ``pinned'' dot products.

\begin{theorem} \label{PinTransDotProd}
Let $E \subseteq \F^2$ be a set in the 2-dimensional vector space over a finite field with $q$ elements. Suppose that $|E| > q$. There exist $x,y \in E$ such that
\[
|E \cdot (y-x)| = |\{ u \cd (y-x) : u \in E \}| > \f{q}{2}.
\]
\end{theorem}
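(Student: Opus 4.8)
The plan is to fix a nonzero direction and interpret $|E \cd v|$ geometrically. For fixed $v \neq 0$ the map $u \mapsto u \cd v$ is a linear functional on $\F^2$ whose fibres are the $q$ cosets (parallel lines) of the line $v^\perp = \{w : w \cd v = 0\}$ through the origin. Hence $|E \cd v|$ equals the number of these parallel lines that meet $E$, a quantity depending only on the \emph{direction} of $v$, since rescaling $v$ merely permutes $\F$. So it suffices to locate a good direction and then verify that this direction is realised as a genuine difference $y-x$.

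First I would bound $|E \cd v|$ from below by Cauchy--Schwarz. Writing $a_1, \dots, a_{|E \cd v|}$ for the sizes of the nonempty fibres, we have $\sum_i a_i = |E|$ and $\sum_i a_i^2 = \#\{(u,u') \in E \times E : (u-u') \cd v = 0\} = \#\{(u,u') : u - u' \in v^\perp\}$. Thus $|E|^2 \le |E \cd v| \cd D(v^\perp)$, where for a line $\ell$ through the origin I set $D(\ell) = \#\{(u,u') \in E^2 : u - u' \in \ell\}$. Lower-bounding $|E \cd v|$ is therefore equivalent to finding a direction $\ell$ along which $E$ has few differences (counted with multiplicity).

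Next I would average $D(\ell)$ over the $q+1$ lines through the origin. Every difference $u - u'$ with $u \neq u'$ lies on exactly one such line, while the $|E|$ zero differences lie on all $q+1$ of them; hence $\sum_\ell D(\ell) = (|E|^2 - |E|) + (q+1)|E| = |E|(|E| + q)$, so some line $\ell^*$ satisfies $D(\ell^*) \le |E|(|E|+q)/(q+1)$. Any $v$ with $v^\perp = \ell^*$ then gives $|E \cd v| \ge |E|^2/D(\ell^*) \ge |E|(q+1)/(|E|+q)$, which exceeds $q/2$ precisely because the hypothesis $|E| > q$ forces $|E| + q < 2|E|$.

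The step I expect to be the crux is ensuring this optimal direction is actually attained by a difference $y - x$ with $x,y \in E$, since the theorem demands an honest pinned product rather than an abstract functional. This is exactly where $|E| > q$ re-enters: the $q$ parallel lines in any fixed direction partition $\F^2$, so by pigeonhole two points of $E$ must share one of them, i.e.\ $E - E$ contains a nonzero vector in \emph{every} direction. Choosing $v = y - x \in E - E$ in the direction perpendicular to $\ell^*$, and using that $w \mapsto w^\perp$ permutes the $q+1$ directions, then yields $x, y \in E$ with $|E \cd (y-x)| > q/2$, as required.
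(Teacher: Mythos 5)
Your proof is correct and is essentially the paper's argument in a different guise: your quantity $D(v^\perp)$ is exactly the second moment $\sum_t i(\ell_{\theta,t})^2$ of the incidence function over the lines orthogonal to $v$, your average of $D(\ell)$ over the $q+1$ lines through the origin reproduces the identity $\sum_\ell i(\ell)^2 = |E|^2 + q|E|$, and your pigeonhole step showing $E-E$ realises every direction is precisely the Iosevich--Morgan--Pakianathan lemma. The Cauchy--Schwarz conclusion and the use of $|E|>q$ in both places match the paper's proof.
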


The lower bound is sharp. 
If $q=p^2$ is the square of a prime $p$ and $E$ is the Cartesian product of a subfield isomorphic to $\mathbb{F}_p$, then $|E|=p^2=q$ and $|E \cdot (y-x)| = p = \sqrt{q}$ for all $x,y \in E$. 

It is likely that one could prove the existence of $x \in E$ such that $|E \cd (E-x)| > q/2$ provided that $|E| = \Omega(\sqrt{q \log(q)})$ by combining Theorem~2 in \cite{IRZ2015} with Theorem 2.6 in \cite{HIKR2011}. Our proof is different. 

The advantage of proving a result about ``pinned'' dot products is illustrated in the following corollary.

\begin{corollary} \label{E+E dot E-E}
Let $E \subseteq \F^2$ be a set in the 2-dimensional vector space over a finite field with $q$ elements. Suppose that $|E| > q$. There exist $x,y \in E$ such that
\[
\F = (E+E) \cd (y-x) = \{ (u+v) \cd (y-x) : u,v \in E\}.
\]
In particular 
\[
\F = (E+E) \cd (E-E) = \{ (u+v) \cd (z-w) : u,v,z,w \in E\}.
\]
\end{corollary}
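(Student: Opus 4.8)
The plan is to derive the corollary entirely from Theorem~\ref{PinTransDotProd} together with an elementary covering argument for subsets of $\F$ whose density exceeds $1/2$. First I would apply Theorem~\ref{PinTransDotProd} to the set $E$, which by hypothesis satisfies $|E| > q$, to obtain $x, y \in E$ with $|E \cd (y-x)| > q/2$. Writing $w = y - x$ and setting $A = E \cd w = \{u \cd w : u \in E\} \subseteq \F$, this says precisely that $|A| > q/2$. These are the same $x, y$ that will appear in the statement of the corollary.

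The key observation is that the dot product is additive in its first argument, so that
\[
(E+E) \cd w = \{(u+v) \cd w : u, v \in E\} = \{u \cd w + v \cd w : u, v \in E\} = A + A,
\]
the sumset of $A$ with itself. Thus the claim $(E+E) \cd (y-x) = \F$ is equivalent to the statement $A + A = \F$.

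To prove $A + A = \F$ I would use a reflection/pigeonhole argument that, crucially, does not require $q$ to be prime. Fix an arbitrary $t \in \F$ and consider the two sets $A$ and $t - A := \{t - a : a \in A\}$. Both are subsets of $\F$ of cardinality $|A| > q/2$, so $|A| + |t - A| > q = |\F|$, which forces $A \cap (t - A) \neq \emptyset$. Any element of this intersection can be written both as $a$ and as $t - a'$ with $a, a' \in A$, giving $t = a + a' \in A + A$. Since $t$ was arbitrary, $A + A = \F$, establishing the first displayed equality of the corollary.

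Finally, the ``in particular'' statement follows immediately: since $w = y - x \in E - E$, we have
\[
\F = (E+E) \cd w \subseteq (E+E) \cd (E-E) \subseteq \F,
\]
so equality holds throughout. I do not expect any genuine obstacle here, since all the real content has been absorbed into Theorem~\ref{PinTransDotProd}; the remaining steps are routine. The only point requiring a little care is to resist invoking Cauchy--Davenport (which is special to prime fields) and instead argue by pure counting, which is valid over an arbitrary $\F$.
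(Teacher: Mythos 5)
Your proposal is correct and follows essentially the same route as the paper: apply Theorem~\ref{PinTransDotProd} to get $S = E \cd (y-x)$ with $|S| > q/2$, then conclude $S + S = \F$ by the pigeonhole observation that $S$ and $t - S$ must intersect for every $t \in \F$. The only (welcome) extra detail you supply is spelling out the identity $(E+E)\cd w = S + S$ and the inclusion chain for the ``in particular'' clause, both of which the paper leaves implicit.
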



When $E = A \times A$ for a symmetric set $A$ (that is, $-A=A$), we recover a result of Glibichuk from \cite{Glibichuk2006} that asserts that the 8-fold sumset of $AA$ is the whole of $\F$ provided that $|A| > \sqrt{q}$:
\begin{equation} \label{Glibichuk}
\F = \{a_1 a_2 + a_3 a_4 + \dots + a_{15} a_{16} : a_1,\dots, a_{16} \in A \}.
\end{equation}

We conclude the introduction with a short detour on a similar question.  As mentioned above, Hart, Iosevich, Koh, and Rudnev showed in \cite{HIKR2011} that the lower bound $q^{3/2}$ is essentially optimal if we require $E \cd E$ to be a positive proportion of $\F$. 
Their example is the union of half-lines and so tells us little about the case where $E = A \times A$ is a Cartesian product. When $E = A \times A$ is a Cartesian product, the set of dot products takes the form $AA+AA$. Bounding from below $|AA+AA|$ has received much attention in the literature and is worth summarising. 
\begin{enumerate}
\item Hart and Iosevich proved in \cite{Hart-Iosevich2008} that 
\[
|AA+AA| = \Omega\left(\min\left\{q, \f{|A|^3}{q}\right\}\right).
\]
A more precise result they proved is that $|AA+AA|>q/2$ when $|A| > q^{2/3}$.
\item When $A$ is a multiplicative subgroup of $\F^*$ we have $AA+AA=A+A$. Heath-Brown and Konyagin proved in \cite{Heath-Brown-Konyagin2000} the following lower bound for multiplicative subgroups of prime order fields (we replace $q$ by $p$ for clarity) via an elegant application of Stepanov's method 
\[
|A+A| = \Omega\left( \min\{p, |A|^{3/2}\}\right).
\]
Shkredov and Vyugin improved the lower bound at the cost of the additional assumption $|A| = O(p^{1/2})$ in \cite{Shkredov-Vyugin2012}
\[
|A+A| = \Omega(|A|^{5/3} \log(|A|)^{-1/2}).
\] 
Shkredov recently proved in \cite{Shkredov2016Tripling} that under the same hypothesis $|A| = O(p^{1/2})$
\[
|A+A+A| = \Omega(|A|^2/\log(|A|)).
\]
This corresponds to  $E \cd E$ for $E = A \times A \times A \subset \F^3$.
\item Rudnev in \cite{Rudnev} generalised the Heath-Brown and Konyagin lower bound to all sets in a prime order field 
\[
|AA+AA| = \Omega\left( \min\{p, |A|^{3/2}\}\right).
\]
\end{enumerate} 
It is likely that $AA+AA$ is at least, say, half of $\F^*$ for any set $A$ of cardinality a sufficiently large multiple of $\sqrt{q}$, at least for prime $q$.

In the next section we offer an overview of the proof of Theorem~\ref{PinTransDotProd} and prove the necessary lemmata. In the final section we prove Theorem~\ref{PinTransDotProd} and Corollary~\ref{E+E dot E-E}.

\begin{acknowledgement}
The author would like to than Alex Iosevich, Jonathan Pakianathan, and Misha Rudnev for helpful conversations. Misha Rudnev suggested using Lemma~\ref{IMP}, which simplified the original argument and yielded the optimal lower bound on $|E|$ in Theorem~\ref{PinTransDotProd}.
\end{acknowledgement}

\section{First and second moment calculations}
\label{Lemmata}

The proof of Theorem~\ref{PinTransDotProd} consists of two distinct parts. Before describing them, let us set up some notation. To a direction $\theta \in \F \cup \{\infty\}$ we associate the direction vector $v_\theta$, which equals $(1,\theta)$ if $\theta \in \F$ and $(0,1)$ if $\theta =\infty$. A \emph{direction $\theta \in \F \cup \{\infty\}$ is determined by a set $E \subseteq \F^2$} if there exists $\lambda \in \F^*$ such that $ \lambda v_\theta \in E$. 

The two steps of the proof are as follows.
\begin{enumerate}
\item Let $E,F$ be two sets in $\F^2$. Suppose that $|E| > q$ and that $F$ determines all directions in $\F\cup \{\infty\}$. There exists $v \in F$ such that
\[
|E \cd v| = |\{ u \cd v : u \in E \}| > \f q 2.
\] 
\item Let $E$ be a set in $\F^2$. Suppose that $|E| > q$. Every direction in $\F\cup\{\infty\}$ is determined by $E$. This is a result of Iosevich, Morgan, and Pakianathan (Theorem~2 in \cite{IMP2011}). It was proved in the case where $E=A \times B$ is a Cartesian product by Bourgain, Glibichuk, and Konyagin in \cite{BGK2006}.
\end{enumerate}

The first step can be thought of as a discrete version of a classical theorem of Marstrand about projections in Euclidean space \cite{Marstrand1954} and will be proved by a simple a second moment calculation. The second step 
will be proved by an application of the pigeonhole principle.

\subsection{The second moment of a point-line incidence function}
\label{Incidences}

Given two sets $E, F \in \F^2$ and $t \in F$, there exist $u \in E$ and $v \in F$ such that $u \cd v = t$ precisely when $E$ is incident to the line $\{ w \in \F^2: w \cd v = t \}$. 
To motivate the proof of the first step outlined above, suppose for a contradiction that $F$ determines $\Omega(q)$ directions and that $|\F \sm (E \cd F)| = \Omega(q)$. It follows that $|E|$ is not incident to $\Omega(q^2)$ lines. To show that this is impossible when $|E| = \Omega(q)$ we will prove that ``$E$ is incident to most lines roughly the expected number of times''.

To this end we denote by $i(\ell)$ the number of incidences of a line $\ell \subseteq \F^2$ with $E$
\begin{equation}\label{i}
i_E(\ell) = i(\ell) = | \ell \cap E|.
\end{equation}

There are $q(q+1)$ lines in $\F^2$ ($q+1$ possible slopes and $q$ possible $y$-axis intercepts) and $|E| (q+1)$ point-line incidences between $E$ and the set of all lines with slope in $\F$ (there are $q+1$ lines incident to each point of $E$). Therefore, on average a line is incident to $|E|/q$ points from $E$. We show that this is typically a very good estimate by obtaining an exact expression for the second moment of $i(\ell)$.

\begin{lemma} \label{var i}
Let $E\subseteq F_q^2$ and $i$ be the function defined in \eqref{i}. 
\[
\sum_\ell i(\ell)^2 = |E|^2 +  q|E|.
\]
The sum is over all lines in $\F^2$. 
\end{lemma}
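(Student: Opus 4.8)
The plan is to interpret $\sum_\ell i(\ell)^2$ as a count of ordered collinear pairs of points of $E$, and then to switch the order of summation. First I would write $i(\ell)^2 = |\ell \cap E|^2 = \#\{(u,w) \in E \times E : u \in \ell \text{ and } w \in \ell\}$, so that
\[
\sum_\ell i(\ell)^2 = \sum_\ell \#\{(u,w) \in E \times E : u,w \in \ell\} = \sum_{(u,w) \in E \times E} \#\{\ell : u,w \in \ell\},
\]
where the last equality simply reverses the roles of the line and the pair, and the inner quantity counts the number of lines passing through both $u$ and $w$.

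The key step is to evaluate this inner count, splitting into two cases according to whether the pair is diagonal or not. When $u = w$, a single point of $\F^2$ lies on exactly $q+1$ lines, one for each of the $q+1$ possible slopes in $\F \cup \{\infty\}$; since there are $|E|$ diagonal pairs, this contributes $(q+1)|E|$ to the sum. When $u \neq w$, two distinct points of $\F^2$ determine a unique line, so each of the $|E|^2 - |E|$ off-diagonal pairs contributes exactly $1$. Adding the two contributions gives
\[
\sum_\ell i(\ell)^2 = (q+1)|E| + \left( |E|^2 - |E| \right) = |E|^2 + q|E|,
\]
which is the claimed identity.

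I do not expect any genuine obstacle here: the argument is an exact double count, so no error terms arise. The only points requiring care are the two elementary affine-geometry facts that $\F^2$ has exactly $q+1$ lines through any given point and exactly one line through any two distinct points. Both follow immediately from the parametrisation of lines by a slope in $\F \cup \{\infty\}$ together with an intercept, and they are precisely the counts already invoked in the paragraph preceding the lemma (``$q+1$ lines incident to each point'' and the total of $q(q+1)$ lines). The rest is bookkeeping.
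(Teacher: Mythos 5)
Your argument is correct and is essentially identical to the paper's proof: both expand $i(\ell)^2$ as a count of ordered pairs of points of $E$ on $\ell$, swap the order of summation, and split into the diagonal case (contributing $(q+1)|E|$, since $q+1$ lines pass through a point) and the off-diagonal case (contributing $|E|^2-|E|$, since two distinct points determine a unique line). No issues.
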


A few remarks before we prove this lemma. It asserts in probabilistic language that $\mathrm{Var}[i] \leq \mathrm{E}[i]$ and is based on the fact a collection of lines in $\F^2$ is a pseudorandom collection of subsets. It is a generalisation of Lemma~2.1 of Bourgain, Katz, and Tao from \cite{BKT2004}. The authors considered the case where $E = A \times B$ is a Cartesian product. It is also very close to a point-line incidence theorem of Vinh \cite{Vinh2011}. 

\begin{proof}[Proof of Lemma~\ref{var i}]
Sums are over all lines in $\F^2$.  We denote by $\ell$ the characteristic function of a line $\ell$.
\begin{align*}
\sum_\ell i(\ell)^2 
& = \sum_\ell \left( \sum_{v \in E} \ell(v) \right)^2 \\
& = \sum_\ell  \sum_{v,v' \in E} \ell(v) \ell(v') \\
& = \sum_{v \in E} \sum_\ell \ell(v) + \sum_{v\neq v' \in E} \sum_{\ell} \ell(v) \ell(v')\\
& =  |E| (q+1) + |E| (|E| - 1) \\
& = |E|^2 + q |E|.
\end{align*}
In the penultimate line we used the facts that $q+1$ lines are incident to a point and that two distinct points determine a unique line.

\end{proof}

\subsection{Many directions give a good vector to project on}

Deducing the first step outlined at the beginning of the section is now only a matter of labelling lines, averaging, and and applying the Cauchy-Schwarz inequality.

\begin{corollary} \label{Many directions}
Let $E,F$ be two sets in $\F^2$. Suppose that $|E| > q$ and that $F$ determines all directions in $\F\cup\{\infty\}$. There exists $v \in F$ such that
\[
|E \cd v| = |\{ u \cd v : u \in E \}| > \f q 2.
\]
\end{corollary}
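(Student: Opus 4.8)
The plan is to argue by contradiction, exploiting the exact second-moment identity from Lemma~\ref{var i}. Suppose that $|E \cd v| \leq q/2$ for every $v \in F$. Fix a direction $\theta$ determined by $F$, witnessed by $\lambda v_\theta \in F$ with $\lambda \in \F^*$. The key observation is that for a fixed direction vector $v$, the quantity $u \cd v$ only groups the points of $E$ according to which line of slope determined by $v$ they lie on: the lines $\{ w : w \cd v = t\}$ as $t$ ranges over $\F$ are exactly the $q$ parallel lines in the direction perpendicular to $v$. Thus the number of distinct values taken by $u \cd v$ over $u \in E$ equals the number of such parallel lines that $E$ actually meets, i.e. the number of $t$ with $i(\ell_{v,t}) \geq 1$, where $\ell_{v,t} = \{w : w\cd v = t\}$.

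First I would make this dictionary precise. For each $v \in F$ (equivalently, for each determined direction $\theta$, after noting $\lambda v_\theta$ and $v_\theta$ define the same family of parallel lines since $\lambda \in \F^*$ only rescales the value $t$ but not the partition of $E$ into lines), the assumption $|E \cd v| \leq q/2$ says that $E$ meets at most $q/2$ of the $q$ parallel lines in this direction, hence $E$ \emph{misses} at least $q/2$ of them. Since $F$ determines all $q+1$ directions, we obtain at least $(q+1)\cdot (q/2)$ pairs (direction, line) such that the line is disjoint from $E$. Because distinct directions give genuinely distinct lines, these are at least $(q+1)q/2$ distinct lines $\ell$ with $i(\ell) = 0$.

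Now I would invoke the counting from Lemma~\ref{var i} together with Cauchy--Schwarz, restricting attention to the lines $E$ actually meets. Let $N$ denote the number of lines with $i(\ell) \geq 1$; by the previous paragraph $N \leq q(q+1) - (q+1)q/2 = (q+1)q/2$. The total incidence count is $\sum_\ell i(\ell) = |E|(q+1)$, so by Cauchy--Schwarz applied only over the $N$ lines with positive incidence,
\[
\bigl(|E|(q+1)\bigr)^2 = \Bigl(\sum_{i(\ell)\geq 1} i(\ell)\Bigr)^2 \leq N \sum_\ell i(\ell)^2 = N\bigl(|E|^2 + q|E|\bigr).
\]
Substituting the bound on $N$ and $\sum_\ell i(\ell)^2$ yields $|E|^2(q+1)^2 \leq \tfrac{1}{2}q(q+1)\cdot |E|(|E|+q)$, which after dividing by $|E|(q+1)$ simplifies to $2|E|(q+1) \leq q(|E|+q)$, i.e. $|E|(2q+2 - q) \leq q^2$, giving $|E|(q+2) \leq q^2$ and hence $|E| \leq q^2/(q+2) < q$. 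This contradicts $|E| > q$, completing the proof.

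The main obstacle I anticipate is getting the correspondence in the first two paragraphs airtight: one must verify that $|E\cd v|$ really equals the number of \emph{occupied} parallel lines (and not, say, be contaminated by the value $t=0$ or by the scaling $\lambda$), and that summing the ``missed line'' deficits across the $q+1$ distinct directions does not double-count any line. Once that bookkeeping is correct, the analytic step is just the one application of Cauchy--Schwarz against the clean identity of Lemma~\ref{var i}, and the arithmetic falls out immediately.
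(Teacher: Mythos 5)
Your proof is correct, and the bookkeeping you were worried about does hold up: $|E \cd v|$ is exactly the number of occupied lines in the pencil $\{w : w \cd v = t\}_{t \in \F}$, scaling $v$ by $\lambda \in \F^*$ permutes the labels $t$ without changing the pencil, and the $q+1$ pencils are pairwise disjoint families covering all $q(q+1)$ lines, so the missed lines from distinct directions never coincide. The paper extracts the same content from Lemma~\ref{var i} but runs the argument directly rather than by contradiction: from $\sum_\ell i(\ell)^2 = |E|^2 + q|E| < 2|E|^2$ it pigeonholes over the $q+1$ directions to find a single $\theta$ with $\sum_{t} i(\ell_{\theta,t})^2 < 2|E|^2/q$, and then applies Cauchy--Schwarz \emph{within that one pencil} to conclude $|E \cd v_\theta| > q/2$. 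Your version instead assumes every direction fails, bounds the support of $i$ by $(q+1)q/2$ lines, and applies Cauchy--Schwarz \emph{globally} against the first moment $\sum_\ell i(\ell) = |E|(q+1)$; the resulting inequality $|E|(q+2) \leq q^2$ is the aggregate of the paper's per-direction inequalities summed over all $q+1$ directions. The trade-off is minor: your route needs the exact first-moment count and the distinctness of lines across directions, while the paper's needs only the pigeonhole and produces the good direction constructively (as a direction minimizing the second moment) rather than via contrapositive. Both are second-moment arguments of essentially the same strength.
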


\begin{proof}
We show that there exists $v \in F$ with the property that $E$ is approximately equidistributed on the lines orthogonal to $v$. The conclusion follows in a straightforward manner by the Cauchy-Schwarz inequality. 

For each direction $\theta \in \F\cup\{\infty\}$ let $v_\theta \in F$ be the vector that determines $\theta$ described at the beginning of the section. Next label by $\ell_{\theta,t}$ the line $\{ w \in \F^2 : w \cd v_\theta = t\}$. Lemma~\ref{var i} implies
\[
\sum_{\theta \in \F\cup\{\infty\}} \sum_{t \in \F}  i(\ell_{\theta,t})^2  =  \sum_\ell i(\ell)^2 = |E|^2 + q |E| < 2|E|^2.
\]
Therefore, noting that $i(\ell_{\theta,t})=0$ unless $t \in v_\theta \cdot E$, there exists $\theta \in \F$ such that 
\[
\sum_{t \in v_\theta \cdot E}  i(\ell_{\theta,t})^2 = \sum_{t \in \F}  i(\ell_{\theta,t})^2  < \frac{2|E|^2}{q}.
\]
By the Cauchy-Schwarz inequality
\[
 \frac{2|E|^2}{q} > \sum_{t \in v_\theta \cdot E}  i(\ell_{\theta,t})^2 \geq \frac{\left( \sum_{t \in v_\theta \cdot E}  i(\ell_{\theta,t}) \right)^2}{|v_\theta \cdot E|} = \frac{|E|^2}{|v_\theta \cdot E|},
\]
which implies $|v_\theta \cdot E| > q/2$.
Finally, $v_\theta = \lambda v$ for some $v \in F$ and $\lambda \in \F^*$ and so 
\[
|E \cd v| = |\lambda(E \cd v)| = |E \cd (\lambda v)| = |E \cd v_\theta| > q/2.
\]
\end{proof}

\subsection{A result of Iosevich, Morgan, and Pakianathan}

We now turn our attention to the second step outlined at the beginning of the section and prove the result of Iosevich, Morgan, and Pakianathan. The special case when $E = A \times B$ is a Cartesian product was proved by Bourgain, Glibichuk, and Konyagin (statement (9) in the proof of Theorem~3 in \cite{BGK2006}). For completeness, we provide a proof communicated by Rudnev that is along the lines of the Bourgain, Glibichuk, and Konyagin. It should also be noted that considerations of Alon at the end of Section 4 of \cite{Alon1986} imply a similar result. 

\begin{lemma}[Iosevich, Morgan, and Pakianathan, Theorem~2 in \cite{IMP2011}] \label{IMP}
Let $E$ be a set in $\F^2$. Suppose that $|E| > q$. The difference set $ E-E = \{u -w: u,v \in E\}$ 
determines all directions in $\F \cup\{\infty\}$.
\end{lemma}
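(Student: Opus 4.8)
The plan is to reduce the statement to a one-line application of the pigeonhole principle by encoding each direction as the kernel of a linear functional. Recall that a direction $\theta$ is determined by $E-E$ exactly when there is some $\lambda \in \F^*$ with $\lambda v_\theta \in E-E$; that is, when there exist distinct $u,w \in E$ whose difference $u-w$ is a nonzero scalar multiple of $v_\theta$. Since the set of scalar multiples of $v_\theta$ is precisely the one-dimensional subspace $\langle v_\theta \rangle \subseteq \F^2$, determining $\theta$ amounts to finding two distinct points of $E$ that lie on a common line in the direction $\theta$.

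To make this concrete, for each $\theta \in \F \cup \{\infty\}$ I would introduce the linear functional $\phi_\theta \colon \F^2 \to \F$ whose kernel is exactly $\langle v_\theta \rangle$: one may take $\phi_\theta(x,y) = \theta x - y$ when $\theta \in \F$ and $\phi_\infty(x,y)=x$. A direct check confirms that $\phi_\theta(v_\theta)=0$ and that $\ker \phi_\theta = \langle v_\theta \rangle$, so the fibres of $\phi_\theta$ are precisely the $q$ parallel lines in the direction $\theta$. The key observation is then that two distinct points $u,w \in E$ satisfy $\phi_\theta(u)=\phi_\theta(w)$ if and only if $u-w \in \ker\phi_\theta \sm \{0\} = \langle v_\theta\rangle \sm\{0\}$, which is exactly the condition that $\theta$ be determined by $E-E$.

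With this in place the proof is immediate. The restriction $\phi_\theta|_E$ is a map from $E$ into $\F$, a set of size $q$. Since $|E| > q$, this map cannot be injective, so there exist distinct $u,w \in E$ with $\phi_\theta(u)=\phi_\theta(w)$. By the equivalence above, $u-w$ is a nonzero multiple of $v_\theta$, and hence $\theta$ is determined by $E-E$. As $\theta \in \F \cup \{\infty\}$ was arbitrary, every direction is determined.

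I do not expect a genuine obstacle here: the argument is pure pigeonhole once the correct functional is chosen. The only points requiring care are bookkeeping ones, namely verifying that $\ker \phi_\theta$ is exactly the span of $v_\theta$ (and not something larger) and treating the vertical direction $\theta=\infty$ on the same footing as the finite slopes. The sharpness of the hypothesis is also transparent from this viewpoint: if $|E|=q$ one may take $E$ to be a single line, i.e.\ a single fibre of some $\phi_\theta$; then $E-E$ lies on one line through the origin and determines only the direction $\theta$, not the remaining $q$ directions, so the strict inequality $|E|>q$ cannot be relaxed.
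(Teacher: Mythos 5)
Your proof is correct and is essentially the same argument as the paper's: both are a pigeonhole on the $q$ parallel lines in direction $\theta$, forcing two distinct points of $E$ onto a common line so that their difference is a nonzero multiple of $v_\theta$. The paper phrases this as a collision among the sums $u+\mu v_\theta$ in $\F^2$, while you phrase it as non-injectivity of the quotient functional $\phi_\theta$ on $E$; these are the same counting in different clothing.
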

\begin{proof}
Let $\theta$ be a direction in $\F\cup\{\infty\}$, $v_\theta$ be a vector determining it, and $\ell_\theta = \{\mu v_\theta : \mu \in \F\}$ a line with direction $\theta$.

Note that $|E| |\ell| > q^2$ and so the pairwise products $u + \mu v_\theta$ with $v \in E$ and $\mu \in \F$ cannot all be distinct. Therefore there exist distinct $u,w \in E$ and $\mu, \mu' \in \F$ such that $u + \mu v_\theta = w + \mu' v_\theta$ or $u-w = (\mu' - \mu) v_\theta$. In other words, $E-E$ determines the direction $\theta.$
\end{proof}

\section{Proofs of Theorem~1.1 and Corollary~1.2}
\label{Proofs}


\begin{proof}[Proof of Theorem~\ref{PinTransDotProd}]
The hypothesis $|E|> q$ and Lemma~\ref{IMP} every direction is determined by $E-E$. Corollary~\ref{Many directions} and the hypothesis $|E| > q$ now imply there exists $y -x \in E-E$ such that $|E \cd (y-x)| > q/2$.
\end{proof}

\begin{proof}[Proof of Corollary~\ref{E+E dot E-E}]
Let $S = E \cd (y-x) \subseteq \F$. $|S| > q/2$ and so $S+S = \F$ (for each $\xi \in \F$, the sets $S$ and $\xi - S$ must intersect because the sum of their cardinalities exceeds $q$). Therefore $ \F = E \cd (y-x) + E \cd (y-x) = (E + E) \cd (y-x).$
\end{proof}

When $E = A \times A$ we instantly get equality \eqref{Glibichuk}
.
\begin{align*}
(E+E) \cd (E-E) 
& = [(A+A) \times (A+A)] \cd [(A-A) \times (A-A)] \\
& = (A+A)(A-A) + (A+A) (A-A) \\
& \subseteq AA - AA + AA -AA + AA - AA + AA - AA.
\end{align*}

\begin{unremark}
The arguments presented in this note highlight the importance of the direction set of $E$ in dot-product related questions. They suggest that when looking to bound $|AA+AA|$ from below, the case where $E = A \times A$ for a multiplicative subgroup $A \subseteq \F^*$ might, in a sense, be extremal.
\end{unremark}


\phantomsection

\addcontentsline{toc}{section}{References}

\bibliography{//Users/georgiopetridis//Dropbox/bib/all}

\hspace{20pt} Department of Mathematics, University of Georgia, Athens, GA 30602, USA.

\hspace{20pt} \textit{Email address}: \href{mailto:giorgis@cantab.net}{giorgis@cantab.net}

\end{document}